\documentclass[11pt,a4paper]{amsart}
\usepackage{amssymb,amsmath,amsthm}
\usepackage{inputenc}
\usepackage{graphicx}
\usepackage{subcaption}

\newtheorem{theorem}{Theorem}[]

\newtheorem{lemma}{Lemma}

\newtheorem{proposition}{Proposition}

\renewcommand{\geq}{\geqslant}
\renewcommand{\leq}{\leqslant}

\newcommand{\Tcal}{\mathcal{T}}

\newcommand{\Gcal}{\mathcal{G}}

\newcommand{\wk}{\widetilde{\kappa}}

\newcommand{\R}{\mathbb{R}}

\newcommand{\eps}{\varepsilon}

\renewcommand{\O}{\Omega}

\begin{document}

\title[Density estimates of 1-avoiding sets]{Density estimates of 1-avoiding sets via higher order correlations}

%[Improved upper bound on the density of 1-avoiding sets]{Improved upper bound on the %density of 1-avoiding sets via higher order correlations}
% VAGY: On the density of planar sets without unit distances, The density of 1-avoiding planar sets and higher order convolutions ???

\author{Gergely Ambrus}
\address{Gergely Ambrus, Alfr\'ed R\'enyi Institute of Mathematics,
 POB 127 H-1364 Budapest, Hungary.}
\email{ambrus@renyi.hu}

\author{M\'at\'e Matolcsi}
\address{M\'at\'e Matolcsi: Budapest University of Technology and Economics (BME),
H-1111, Egry J. u. 1, Budapest, Hungary, and Alfr\'ed R\'enyi Institute of Mathematics, POB 127 H-1364 Budapest, Hungary.}
\email{matomate@renyi.hu}

\begin{abstract}
We improve the best known upper bound on the density of a planar measurable set $A$ containing no two points at unit distance to $0.25442$. We use a combination of Fourier analytic and linear programming methods to obtain the result. The estimate is achieved by means of obtaining new linear constraints on the autocorrelation function of $A$ utilizing triple-order correlations in $A$, a concept that has not been previously studied.
\end{abstract}

%\date{\today}

\thanks{G. Ambrus was supported by the NKFIH grant no. PD125502 and the Bolyai Research Fellowship of the Hungarian Academy of Sciences. M. Matolcsi was supported by the NKFIH grant no. K132097 and K129335.}

\subjclass[2010]{42B05, 52C10, 52C17, 90C05}

\keywords{Chromatic number of the plane, distance-avoiding sets,
  linear programming, harmonic analysis}

\maketitle

\section{Introduction}\label{introd}

What is the maximal upper density of a measurable planar set $A$ with no two points at distance 1? This 40-year-old question has attracted some attention recently, with a sequence of progressively improving estimates , the strongest of which currently being that of Bellitto,  P\^{e}cher, and S\'{e}dillot \cite{BPS18}, who gave the upper estimate $0.25646$. In the present article, we provide the new upper bound of $0.25442$, getting enticingly close to the upper estimate of  $0.25$ conjectured by Erd\H os. Our argument builds on the Fourier analytic method of \cite{KMOR15}. The main new ingredient is to estimate certain triple-order correlations in $A$ which lead to new linear constraints for the autocorrelation function $f$ corresponding to $A$.

\medskip

Let $A$ be a Lebesgue measurable, {\em 1-avoiding set} in $\R^2$, that is, a measurable subset of the plane containing no two points at distance 1. Denote by $m_1(\R^2)$  the supremum of possible upper densities of such sets $A$ (for the rigorous definition, see Section~\ref{sec2}). Erd\H os conjectured in \cite{E82} that $m_1(\R^2)$ is less than $1/4$, a conjecture that has been open ever since.

One of the easiest upper bounds for $m_1(\R^2)$ is $1/3$, shown by the fact that $A$ may contain at most one of the vertices of any regular triangle of edge length~1. This simple idea was strengthened by Moser \cite{Mo61} using a special unit distance graph, the Moser spindle, implying that $m_1(\R^2) \leq 2/7 \approx 0.285$. Sz\'ekely \cite{Sze02} improved the upper bound to $\approx 0.279$. Applying Fourier analysis and linear programming Oliveira Filho and Vallentin \cite{OV} proved that $m_1(\R^2) \leq 0.268$, which was further improved to $\approx 0.259$ by Keleti, Matolcsi, Oliveira Filho and Ruzsa \cite{KMOR15}. Recently, Bellitto, P\^{e}cher and S\'{e}dillot \cite{BPS18} (see also Bellitto~\cite{B18}) used a purely combinatorial argument -- based on the fractional chromatic number of finite graphs -- to reach the currently best known bound of 0.25646, by constructing a large unit distance graph inspired by the work of de Grey \cite{G18+} on the chromatic number of the unit distance graph of $\R^2$. We revert here to the Fourier analytic method and prove the following improved bound, getting tantalizingly close to the conjecture of Erd\H os.

\begin{theorem}\label{thm}
Any Lebesgue measurable, 1-avoiding planar set has upper density at most $0.25442$.
\end{theorem}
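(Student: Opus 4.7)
The plan is to extend the Fourier-analytic linear programming framework of \cite{KMOR15} by enriching the constraints on the autocorrelation function of $A$ using three-point (triple) correlations, which forces $f(0)$ below the previous bound.

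\textbf{Step 1 (autocorrelation).} For a measurable $1$-avoiding set $A$ of upper density $\delta$, I would first produce, via an averaging/weak-$*$-limit argument on indicators along a Følner sequence and a further rotational averaging, a continuous, even, positive definite, radial function $f : \R^2 \to \R$ with $f(0) = \delta$, $0 \leq f(t) \leq \delta$, and $f(t) = 0$ for every $|t| = 1$. The basic LP in \cite{KMOR15} maximizes $f(0)$ subject to these properties.

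\textbf{Step 2 (triple correlations).} The new ingredient is to associate with $A$ the triple correlation
\[
g(s,t) := \mathrm{density}\bigl(A \cap (A+s) \cap (A+t)\bigr),
\]
which is non-negative, symmetric, and vanishes whenever any of $|s|, |t|, |s-t|$ equals $1$. Two-set inclusion--exclusion gives
\[
f(s) + f(t) - g(s,t) = \mathrm{density}\bigl((A \cap (A+s)) \cup (A \cap (A+t))\bigr) \leq \delta,
\]
so when $|s-t| = 1$ one obtains the new constraint $f(s) + f(t) \leq \delta$ even though $f(s), f(t)$ may be positive. Three-set inclusion--exclusion applied to $(A \cap (A+v_i))_{i=1,2,3}$ produces identities linking the $f(v_i)$, the $g(v_i, v_j)$, and the $4$-fold density $\mathrm{density}(A \cap (A+v_1) \cap (A+v_2) \cap (A+v_3))$. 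Choosing the $v_i$ so that the configuration $\{0, v_1, v_2, v_3\}$ contains many unit-distance pairs (in the spirit of the Moser spindle or the subgraphs exploited in \cite{BPS18, G18+}) forces enough of the $g(v_i, v_j)$ and the $4$-fold intersection to vanish, leaving, after rigid-motion averaging, new purely linear constraints on the radial profile of $f$.

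\textbf{Step 3 (augmented LP and Fourier duality).} I would then maximize $f(0)$ over all admissible $f$ subject to the classical constraints plus the new ones from Step 2. Dualizing in the Fourier variable, one searches for a finite signed radial measure $\mu$ whose Fourier transform has controlled sign and appropriate vanishing on the unit circle, so that pairing $f$ against $\mu$ certifies $\delta \leq 0.25442$. Discretizing the configuration space and solving the resulting finite-dimensional LP numerically should deliver the claimed bound.

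\textbf{Main obstacle.} The crux is Step 2: finding a small family of configurations whose triple-correlation constraints are (a) genuinely independent of positive definiteness and the pairwise constraints already available in \cite{KMOR15}, and (b) tight enough to improve the bound from $0.259$ all the way to $0.25442$. The interplay between the elimination of the $g$-variables and the infinite-dimensional positive definiteness of $f$ is delicate, and controlling the LP numerically — especially once the rotational averaging produces integrals over unit-distance orbits — will require careful parametric choices rather than brute-force enumeration.
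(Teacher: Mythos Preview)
Your overall architecture (autocorrelation, triple correlations, Fourier/LP duality) matches the paper, but Step~2 as written misses the key mechanism and, as stated, does not produce any constraint beyond those already available.

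Concretely, your sample ``new'' inequality $f(s)+f(t)\le\delta$ for $|s-t|=1$ is exactly condition~(C1) applied to the two-vertex unit distance graph $\{s,t\}$ (independence number~$1$). More generally, your scheme of choosing $\{0,v_1,v_2,v_3\}$ with many unit-distance pairs so that the relevant $g(v_i,v_j)$ and the $4$-fold term vanish can only return inequalities of the form $\sum_{x\in G} f(x)\le \alpha(G)f(0)$, i.e.\ the classical independence constraints already in \cite{KMOR15}. Killing the higher-order terms by unit distances throws away precisely the new information.

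What the paper does instead is keep the (nonzero) triple density $\Sigma_3$ alive and eliminate it by a sandwich. One constructs two $7$-point unit distance graphs $G_1(\theta)$, $G_2(\theta)$, both with $\alpha=3$ and sharing the \emph{same} pair of independent triples, so $\Sigma_3(G_1)=\Sigma_3(G_2)$. Inclusion--exclusion on $\bigcup_{x\in G_1}(A-x)$ (where $\alpha\le 3$ kills $\Sigma_i$ for $i\ge 4$) gives the upper bound
\[
\Sigma_3 \le 1 - |G_1|f(0) + \sum_{\{x,y\}\in\binom{G_1}{2}} f(x-y),
\]
while a covering count inside $A$ over $G_2$ gives the lower bound
\[
\Sigma_3 \ge \sum_{x\in G_2} f(x) - 2f(0).
\]
Combining these eliminates $\Sigma_3$ and yields a genuinely new linear constraint~(CT) on $f$, not implied by positive definiteness or by~(C1)/(C1R). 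It is this matched-pair trick, together with a numerical search over the angle~$\theta$, that pushes the bound down to $0.25442$; your Step~2 does not contain an analogue of it.
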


Despite considerable efforts, these upper bounds are still very far from the largest lower bound for $m_1(\R^2)$, that is, $0.22936$, which is given by a construction of Croft \cite{Cr67}.

The question may be formulated in higher dimensions as well. The articles of Bachoc, A. Passuello, and A. Thiery \cite{BPT15} and of DeCorte, Oliveira Filho and Vallentin \cite{DOV} contain detailed historical accounts and a complete overview of recent results in that direction.

Perhaps the most famous related question is the Hadwiger-Nelson problem about the chromatic number $\chi(\R^2)$ of the plane: how many colours are needed to colour the points of the plane so that there is no monochromatic segment of length 1? Recently, de Grey \cite{G18+} proved that $\chi(\R^2) \geq 5$, a result which stirred up interest in this area.

\section{Subgraph constraints} \label{sec2}

Our proof is based on the techniques presented in  \cite{KMOR15} and \cite{DOV}, with an essential new ingredient of including triple-correlation constraints.

Let $A\subset \R^2$ be a measurable, 1-avoiding set. The {\em upper density of $A$}, denoted by $\overline{\delta(A)}$, is given by
\[
\overline{\delta(A)} = \limsup_{R \rightarrow \infty} \frac{\lambda_2(A \cap D(x, R))}{\lambda_2(D(x, R))},
\]
where $\lambda_2$ is the planar Lebesgue measure, and $D(x, R)$ denotes the disc of radius $R$ centered at $x$. The upper density is independent of the choice of $x\in \R^2$. In case the limit of the above quantity also exists, we call it the {\em density} of $A$, denoted by $\delta(A)$:
\[
\delta(A) = \lim_{R \rightarrow \infty} \frac{\lambda_2(A \cap D(x, R))}{\lambda_2(D(x, R))},
\]
which is again known to be independent of $x$. 

Our goal is to estimate
\[
m_1(\R^2) = \sup \{ \overline{\delta(A)} \, : \, A\subset \R^2 \textrm{ is 1- avoiding and measurable}  \}
\]
from above.

Due to a trivial argument taking limits \cite{KMOR15}, we may assume that $A$ is periodic with respect to a lattice $L \subset \R^2$, i.e. $A = A+L$. Measurable periodic sets always have densities. Moreover, $m_1(\R^2)$ may be approximated arbitrarily well by densities of 1-avoiding, measurable, periodic sets \cite{OV}. Therefore, we may restrict ourselves to this class when estimating $m_1(\R^2)$.

The {\em autocorrelation function} $f: \R^2 \rightarrow \R$ of $A$ is defined by
\begin{equation}\label{autof}
f(x) = \delta (A \cap (A-x)).
\end{equation}
Then $\delta(A) = f(0)$, and the fact that $A$ is 1-avoiding translates to the condition that $f(x)=0$ for all unit vectors $x$.

\medskip

To introduce some further notations, assume that $C$ is a finite set of points in the plane. $\binom{C}{i}$ will denote the set of $i$-tuples of distinct points of~$C$. Further, let
\begin{equation}\label{sigma}
\Sigma_i(C) = \sum_{\{x_1, \ldots, x_i \}\in \binom{C}{i}} \delta \left(  (A - x_1)\cap \ldots \cap (A - x_i) \right)
\end{equation}
and
\begin{equation}\label{sigmacirc}
\Sigma_i^\circ (C) = \sum_{\{x_1, \ldots, x_i \}\in \binom{C}{i}} \delta \left( A \cap (A - x_1)\cap \ldots \cap (A - x_i) \right)\,.
\end{equation}
By convention, $\Sigma_0 (C)=1$ and $\Sigma_0^\circ (C) = f(0)$. Note also that $\Sigma_1 (C)= |C| f(0)$, and $\Sigma_1^\circ (C)= \sum_{x \in C} f(x)$.
Obviously,
\begin{equation}\label{sigeq}
\Sigma_i^\circ (C)\leq \Sigma_i (C)
\end{equation} holds for every $i$.

\medskip

The estimate for $m_1(\R^2)$ of Keleti et al. \cite{KMOR15} relies on the following lemma. A graph is called a {\em unit distance graph} if its vertex set is a subset of $\R^2$, and its edges are given by the pairs of points being at distance~1. The independence number (i.e. the maximal number of independent vertices) of a graph $G$ is denoted by $\alpha(G)$. For simplicity, if not specified otherwise, we denote the vertex set of a graph $G$ by the same letter $G$, while the set of edges is denoted by $E(G)$.

\begin{lemma}[\cite{OV, Sze02} (cf. also \cite{KMOR15})]\label{lemma1}
Let $f$ be the autocorrelation function of a measurable, periodic, 1-avoiding set $A \subset \R^2$, as defined in \eqref{autof}. Then:
\begin{itemize}
\item[(C0)] $f(x) = 0$ for every $x \in \R^2$ with $|x|=1$;
\item[(C1)] If $G$ is a finite unit distance graph, then
\[
\sum_{x \in G} f(x) \leq \alpha(G)f(0);
\]
\item[(C2)] If $C \subset \R^2$ is a finite set of points, then
\[
\sum_{\{x,y\} \in \binom{C}{2}} f(x-y) \geq |C|f(0) -1.
\]
\end{itemize}
\end{lemma}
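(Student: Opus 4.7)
Throughout, fix a fundamental domain $F$ of the period lattice $L$, so that $\delta(B) = |B\cap F|/|F|$ for any $L$-periodic measurable set $B$, and for any periodic integrand,
\[
\int_F 1_B(y + z)\, dy = |B\cap F| = \delta(B)\,|F|
\]
by translation invariance of Lebesgue measure together with periodicity. Property (C0) is then immediate: if $|x|=1$ and $y\in A\cap(A-x)$, then $y,\,y+x\in A$ lie at distance $1$, contradicting the 1-avoiding hypothesis; so $A\cap (A-x) = \emptyset$ and $f(x)=0$.

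For (C1), the key observation is an ``independent set in a random translate'' averaging. Consider the counting function
\[
g(y) = \sum_{x\in G} 1_A(y+x).
\]
For a fixed $y\in A$, the set $I(y)=\{x\in G:\, y+x \in A\}$ has the property that for any two distinct $x,x'\in I(y)$ we cannot have $|x-x'|=1$, since otherwise $y+x$ and $y+x'$ would be a unit-distance pair in $A$. Hence $I(y)$ is independent in the unit distance graph $G$, so $|I(y)|\leq \alpha(G)$. Thus $1_A(y)\, g(y) \leq \alpha(G)\,1_A(y)$ pointwise. Integrating over $F$ and dividing by $|F|$ yields
\[
\sum_{x\in G} f(x) \;=\; \frac{1}{|F|}\int_F 1_A(y)\, g(y)\, dy \;\leq\; \alpha(G)\,\delta(A) \;=\; \alpha(G)\, f(0),
\]
since $\int_F 1_A(y)\,1_A(y+x)\,dy = |F|\,f(x)$ by the periodicity remark above.

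For (C2), I would combine the same ``second moment'' computation with the elementary convexity inequality
\[
\binom{n}{2} \;\geq\; n - 1 \qquad \text{for every integer } n \geq 0,
\]
valid because the difference $\binom{n}{2} - (n-1) = (n-1)(n-2)/2$ is nonnegative for all $n \in \N$. Applying this to $n = g(y)$ and integrating gives
\[
\int_F \binom{g(y)}{2}\, dy \;\geq\; \int_F g(y)\, dy \;-\; |F|.
\]
The left side expands as $\sum_{\{x,x'\}\in\binom{C}{2}} \int_F 1_A(y+x)\,1_A(y+x')\,dy = |F|\,\sum_{\{x,x'\}\in\binom{C}{2}} f(x-x')$, while the right side equals $|C|\, f(0)\,|F| - |F|$. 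Dividing by $|F|$ delivers (C2).

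The only step that requires any thought is the convexity inequality driving (C2); the rest is bookkeeping, and in particular (C1) is a direct reformulation of ``the number of points of a translate of $G$ landing in $A$ forms an independent subset of $G$.'' I would not anticipate any real obstacle — the only subtlety is making sure the averaging identity $\int_F 1_A(y)\,1_A(y+x)\,dy = |F|\,f(x)$ is justified using the $L$-periodicity of the integrand, which is precisely why the reduction to periodic $A$ was carried out at the beginning of the section.
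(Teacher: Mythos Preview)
Your argument is correct and matches the paper's approach: the paper does not prove Lemma~\ref{lemma1} directly (it is cited from \cite{OV,Sze02,KMOR15}), but it recovers (C1) as the unit-distance case of the subgraph constraint (C1R) via exactly your ``translate lands in an independent set'' observation, and it sketches (C2) in Section~\ref{sec_triple} via the Bonferroni/inclusion--exclusion inequality $1\geq \delta\bigl(\bigcup_{x\in C}(A-x)\bigr)\geq \Sigma_1(C)-\Sigma_2(C)$, which is the integrated form of your pointwise inequality $\binom{g(y)}{2}\geq g(y)-1$. The only cosmetic difference is that the paper phrases (C2) in terms of the union bound rather than your second-moment language, but the two are equivalent.
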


Constraint (C1) was first used by Oliveira and Vallentin \cite{OV}, while Sz\'ekely applied (C2) in \cite{Sze02}.

\medskip

We  will need a relaxed version of Lemma~\ref{lemma1}, which appeared in Section 7.1. of \cite{DOV} entitled as a {\em subgraph constraint}. As the actual formula is somewhat hard to extract from the discussion of \cite{DOV}, we include a short proof for convenience.

\begin{lemma}\label{lemma_subgraph}
Let $G$ be a finite graph with independence number $\alpha(G)$. Then
\begin{align}
\tag*{\hspace{10 pt}(C1R)} \sum_{x \in G} f(x) -  \sum_{\{ x, y \} \in E(G)} f(x - y) &\leq \alpha(G)f(0).
\end{align}
\end{lemma}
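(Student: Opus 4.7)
I would prove (C1R) by passing to integrals of indicator functions and then establishing a pointwise inequality in the integrand. Let $F$ be a fundamental domain of $L$, so that $f(z) = \frac{1}{|F|}\int_F \mathbf{1}_A(u)\mathbf{1}_A(u+z)\,du$. Translation-invariance of the periodic integral (shifting $u \mapsto u+y$) lets me rewrite
\[
f(x-y) \;=\; \frac{1}{|F|}\int_F \mathbf{1}_A(u+x)\,\mathbf{1}_A(u+y)\,du.
\]
Introducing the auxiliary set $G_u := \{x \in G : u+x \in A\} \subseteq V(G)$, I would obtain
\[
\sum_{x\in G} f(x) \;=\; \frac{1}{|F|}\int_F \mathbf{1}_A(u)\,|G_u|\,du, \qquad \sum_{\{x,y\}\in E(G)} f(x-y) \;=\; \frac{1}{|F|}\int_F \bigl|E(G[G_u])\bigr|\,du,
\]
where $G[G_u]$ denotes the induced subgraph of $G$ on the vertex subset $G_u$.

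The crux of the argument is the pointwise inequality
\[
\mathbf{1}_A(u)\,|G_u| \;-\; \bigl|E(G[G_u])\bigr| \;\leq\; \alpha(G)\,\mathbf{1}_A(u), \qquad \text{for every } u \in \R^2.
\]
When $u \notin A$, the right-hand side is $0$ and the left-hand side is $-|E(G[G_u])| \leq 0$, so the inequality is trivial. When $u \in A$, the inequality reduces to $|G_u| - |E(G[G_u])| \leq \alpha(G)$. This is precisely where the edge subtraction in (C1R) pays off: a standard greedy bound tells us that any graph $H$ satisfies $\alpha(H) \geq |V(H)| - |E(H)|$ (remove one endpoint from each edge), so applying this to $H = G[G_u]$ gives $|G_u| - |E(G[G_u])| \leq \alpha(G[G_u])$, and the monotonicity $\alpha(G[G_u]) \leq \alpha(G)$ (since independent sets in the induced subgraph remain independent in $G$) finishes the pointwise estimate.

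Integrating this pointwise bound over $F$ and dividing by $|F|$ yields
\[
\sum_{x\in G} f(x) - \sum_{\{x,y\}\in E(G)} f(x-y) \;\leq\; \alpha(G) \cdot \frac{1}{|F|}\int_F \mathbf{1}_A(u)\,du \;=\; \alpha(G)\,\delta(A) \;=\; \alpha(G)\,f(0),
\]
which is (C1R). I do not anticipate a serious obstacle: the only delicate step is recognizing that the greedy bound on $\alpha$ is exactly what converts the otherwise-wasteful overcounting of the vertex sum into a sharp estimate once the edge contributions are subtracted; the rest is bookkeeping with periodic integrals. Note also that this specializes correctly: if $G$ is a unit-distance graph, then $f(x-y)=0$ on every edge by (C0), and (C1R) collapses to (C1).
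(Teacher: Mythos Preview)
Your proof is correct and follows essentially the same route as the paper's: both reduce (C1R) to the pointwise combinatorial inequality $|V(H)|-|E(H)|\le \alpha(H)$ for the induced subgraph $H=G[G_u]$ on the vertices $x$ with $u+x\in A$, then integrate. The only cosmetic differences are that the paper integrates over $A$ and afterwards bounds $\delta(A\cap(A-x)\cap(A-y))\le f(x-y)$ (which corresponds to your ``$u\notin A$'' case), and it justifies the combinatorial inequality via connected components rather than greedy deletion.
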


Note that
%in contrast to constraint (C1), $G$ above does not need to be unit distance graph.
we may recover  condition (C1) of Lemma~\ref{lemma1} by setting $G$ to be a unit distance graph in (C1R).

\begin{proof}
Consider the translated sets $A-x$ for every $x \in G$. For any point $z \in A$ consider the function
\[
g(z)=|\{x\in G: z\in (A-x)\}|-|\{\{x,y\}\in E(G): z\in (A-x)\cap (A-y)\}|.
\]
Loosely speaking, $g(z)$ counts the number of times $z$ is being covered by translates of $A$ corresponding to vertices of $G$, minus the number of times it is covered by translates corresponding to edges of $G$. We claim that for each $z\in A$,  $g(z)\le \alpha(G)$ holds. To see this, let $v=|\{x\in G: z\in (A-x)\}|$ and $e=|\{\{x,y\}\in E(G): z\in (A-x)\cap (A-y)\}|$, so that $g(z)=v-e$. The vertices $\{x\in G: z\in (A-x)\}$ span a subgraph $G'$ of $G$. Let $G_1, \dots, G_c$ denote the connected components of $G'$. Clearly, the number of components satisfies $c\le \alpha(G)$. Let $v_i$ and $e_i$ denote the number of vertices and edges in $G_i$, respectively. We always have $e_i\ge v_i -1$ (with equality holding if and only if $G_i$ is a tree). Therefore, $e=e_1+\dots +e_c\ge (v_1-1)+\dots +(v_c-1)=v-c\ge v-\alpha(G)$, which proves $g(z)\le \alpha(G)$.

\medskip

Integrating the inequality $g(z)\le \alpha(G)$ over $A$ (with an obvious limiting process, as $A$ is unbounded), we obtain
\[
\sum_{x\in G} \delta(A\cap(A-x)) - \sum_{\{ x, y \} \in E(G)} \delta(A\cap (A-x)\cap (A-y))\le \alpha(G)f(0).
 \]
Finally, noting that $\delta(A\cap(A-x))=f(x)$ and $\delta(A\cap (A-x)\cap (A-y))\le \delta((A-x)\cap (A-y)) =f(x-y)$, we obtain (C1R).
\end{proof}

\section{Triple correlations}\label{sec_triple}

We continue with estimates involving higher order correlations between the points of $A$. The proof of~(C2), as in  \cite{Sze02, KMOR15}, is based on the inclusion-exclusion principle:
\begin{align*}
1 &\geq \delta \left( \bigcup_{x \in C} (A - x) \right)\\
&\geq \sum_{x \in C} \delta(A - x) - \sum_{\{x,y\} \in \binom{C}{2}} \delta((A-x) \cap (A - y))\\
&= |C| \delta(A) - \sum_{\{x,y\} \in \binom{C}{2}} \delta(A \cap (A-(x-y)))\,.
\end{align*}
Note that at the second inequality above, intersections of three or more sets are omitted. We will make use of the natural idea to take into account triple intersections, which is equivalent to studying the density of prescribed triangles in $A$. We will then use these estimates to obtain new linear constraints on the autocorrelation function $f$.

\medskip

First, we set an upper bound for triangle densities.

\begin{lemma}\label{lemma_triangleupper}
Assume that $G\subset \R^2$ is a finite unit distance graph with $\alpha(G) \le 3$. Then

\begin{align}
\tag*{\hspace{10 pt}(T1)} \Sigma_3(G) \leq 1 - |G| f(0) + \sum_{\{x,y\} \in \binom{G}{2} } f(x - y)\,.
\end{align}
\end{lemma}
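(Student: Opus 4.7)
The plan is to leverage the hypothesis $\alpha(G) \le 3$ to turn the inclusion–exclusion expansion for $\delta(\bigcup_{x\in G}(A-x))$ into an \emph{equality} when truncated at level $3$, instead of merely a Bonferroni inequality as in the proof of (C2). The boundedness of the union by $1$ then yields (T1) directly.

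First I would introduce, for each point $z$ in a fundamental domain $F$ of the periodicity lattice $L$, the counting function
\[
n(z) = \bigl|\{x \in G : z \in A - x\}\bigr| = \bigl|\{x \in G : z + x \in A\}\bigr|.
\]
The key observation is that $\{x\in G : z+x\in A\}$ is an \emph{independent set} of $G$: indeed, if $x,y$ both belong to this set, then $z+x, z+y\in A$ are at distance $|x-y|$, and since $A$ is 1-avoiding we must have $|x-y|\neq 1$, so $xy\notin E(G)$. Combined with the assumption $\alpha(G)\le 3$, this forces $n(z) \le 3$ for every $z$.

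Next I would write down the pointwise inclusion–exclusion identity. For any nonnegative integer $n \le 3$, one checks directly that
\[
\binom{n}{1} - \binom{n}{2} + \binom{n}{3} = \mathbf{1}_{n\ge 1},
\]
by inspection of $n\in\{0,1,2,3\}$. Applying this with $n=n(z)$ and integrating over $F$, then dividing by $|F|$, converts the three binomial terms into $|G|f(0)$, $\Sigma_2(G)$, and $\Sigma_3(G)$ respectively, via the identities
\[
\tfrac{1}{|F|}\!\int_F \tbinom{n(z)}{k}\,dz = \Sigma_k(G)
\]
(for $k=1,2,3$), which follow by swapping sum and integral as in the definitions \eqref{sigma}. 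Thus the integrated identity reads
\[
|G|f(0) - \Sigma_2(G) + \Sigma_3(G) \;=\; \delta\!\Bigl(\bigcup_{x\in G}(A-x)\Bigr).
\]

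Finally, bounding the right-hand side by $1$ and rearranging, together with $\Sigma_2(G)=\sum_{\{x,y\}\in\binom{G}{2}} f(x-y)$, gives (T1). The only nontrivial step is the first one, namely recognizing that $\alpha(G)\le 3$ is exactly what is needed to make $n(z)\le 3$ pointwise and thereby turn the Bonferroni truncation into an equality; once this is in hand the rest is bookkeeping.
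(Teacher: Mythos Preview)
Your proof is correct and follows essentially the same route as the paper's: the key step in both is that $\alpha(G)\le 3$ forces the inclusion--exclusion expansion of $\delta\bigl(\bigcup_{x\in G}(A-x)\bigr)$ to terminate exactly at level~$3$ (the paper phrases this as $\Sigma_i(G)=0$ for $i\ge 4$, you phrase it pointwise as $n(z)\le 3$), after which bounding the union by $1$ and rearranging gives (T1). Your write-up is simply a more explicit unpacking of the same argument, in particular spelling out why $\{x\in G: z+x\in A\}$ is independent in $G$.
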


\begin{proof}
Since $\alpha(G) \le 3$, $\Sigma_i(G) = 0$ holds for every $i \geq 4$. Thus, by the inclusion-exclusion principle,
\begin{align*}
1 \geq \delta \left(\bigcup_{x \in G}( A - x)\right) &= \Sigma_1(G) - \Sigma_2(G) + \Sigma_3(G) \\
 &=|G| f(0) -  \sum_{\{x,y\} \in \binom{G}{2}} f(x - y) + \Sigma_3(G)\, .
 \qedhere
\end{align*}
\end{proof}

Next, we derive a lower bound for triangle densities.

\begin{lemma}\label{lemma_trianglelower2}
If $G$ is a finite unit distance graph with $\alpha(G) \le 3$, then

\begin{align}
\tag*{\hspace{10 pt}(T2)} \Sigma_3(G) \geq \Sigma_3^\circ(G) \geq \sum_{x \in G}f(x) - 2 f(0).
\end{align}
\end{lemma}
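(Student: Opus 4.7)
The first inequality $\Sigma_3(G) \ge \Sigma_3^\circ(G)$ is immediate from~\eqref{sigeq}. For the second, my plan is a pointwise counting argument inside $A$, in the spirit of (but simpler than) the $g(z)$ analysis used to prove Lemma~\ref{lemma_subgraph}.

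For each $z \in A$, set $V(z) := \{x \in G : z + x \in A\}$ and $v(z) := |V(z)|$. The key observation is that $V(z)$ is an independent set in $G$: if $x, y \in V(z)$ were joined by a unit-distance edge of $G$, then $z+x$ and $z+y$ would be two points of $A$ at distance $1$, contradicting the 1-avoiding property. Hence $v(z) \in \{0,1,2,3\}$, since $\alpha(G) \le 3$. Unfolding the definitions and writing densities as integrals over a fundamental domain $F$ of the lattice $L$, one obtains
\begin{align*}
f(0) &= \frac{1}{\mathrm{vol}(F)} \int_{A \cap F} 1 \, dz, \\
\sum_{x \in G} f(x) &= \frac{1}{\mathrm{vol}(F)} \int_{A \cap F} v(z) \, dz, \\
\Sigma_3^\circ(G) &= \frac{1}{\mathrm{vol}(F)} \int_{A \cap F} \binom{v(z)}{3} \, dz,
\end{align*}
where the last identity uses that, for $z \in A$, the triples $\{x_1,x_2,x_3\} \in \binom{G}{3}$ with $z \in A \cap (A-x_1) \cap (A-x_2) \cap (A-x_3)$ are exactly the $3$-element subsets of $V(z)$.

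The required inequality thus reduces to the pointwise elementary bound $\binom{v}{3} + 2 \ge v$ for $v \in \{0, 1, 2, 3\}$, which is a trivial case check (with equality at $v = 2$ and $v = 3$); integrating over $A \cap F$ and dividing by $\mathrm{vol}(F)$ yields (T2). I do not anticipate any serious obstacle; the only mild point requiring care is the combinatorial bookkeeping identifying $\Sigma_3^\circ(G)$ with the integral of $\binom{v(z)}{3}$, which is entirely routine once the pointwise picture is set up.
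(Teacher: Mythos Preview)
Your argument is correct and is essentially the same as the paper's: both reduce to the observation that for $z\in A$ the set $V(z)$ is independent (hence $v(z)\le 3$), and then compare $\sum_{x\in G} f(x)=\int_A v(z)$ with $\Sigma_3^\circ(G)=\int_A \binom{v(z)}{3}$. The paper phrases the final step as ``points covered three times have total density $\Sigma_3^\circ(G)$, the rest are covered at most twice,'' which is exactly your pointwise inequality $\binom{v}{3}+2\ge v$ for $v\le 3$.
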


\begin{proof}
The first inequality is trivial, as noted in \eqref{sigeq}. To see the second, consider the sets $G_x=A\cap (A-x)$, and take an arbitrary point $z\in A$.  The point $z$ can be contained in at most three $G_x$'s, because $\alpha(G)\le 3$. The total density of points covered by three $G_x$'s is exactly $\Sigma_3^\circ (G)$. All other points in $A$ are covered by at most two $G_x$'s. Also, the density of $G_x$ is $f(x)$, and $\delta(A)=f(0)$, by definition. Therefore, $\sum_{x \in G}f(x)\le 2 f(0)+\Sigma_3^\circ(G)$.
\end{proof}

%\medskip

We now turn to defining the geometric configurations to which inequalities (T1) and (T2) will be applied. Note that while the statements of Lemma \ref{lemma_triangleupper} and Lemma \ref{lemma_trianglelower2} are fairly trivial, it is not straightforward to find some geometric configurations such that conditions (T1) and (T2) yield non-trivial new constraints on the autocorrelation function $f(x)$. The search for such configurations is almost like looking for a needle in a haystack, and we cannot point out any general method to succeed.

\medskip

Let $\theta\in [0,2\pi]$ and consider the following eight points in the plane (see Figure~\ref{fig1}): $V_1=(0, 0)$, $V_2=(\frac{\sqrt{3}}{2}, \frac{1}{2})$, $V_3=(\frac{\sqrt{3}}{2}, -\frac{1}{2})$, $V_4=(\sqrt{3},0)$,  $V_5=(\cos \theta, \sin \theta)$, $V_6=(\frac{\sqrt{3}}{2} + \cos \theta, \frac{1}{2} + \sin \theta)$, $V_7=(\frac{\sqrt{3}}{2} + \cos \theta, -\frac{1}{2} + \sin \theta)$, $V_8=(\sqrt{3}+\cos \theta, \sin \theta)$.

\begin{figure}[h]
  \centering
  \includegraphics[width = 0.95 \textwidth]{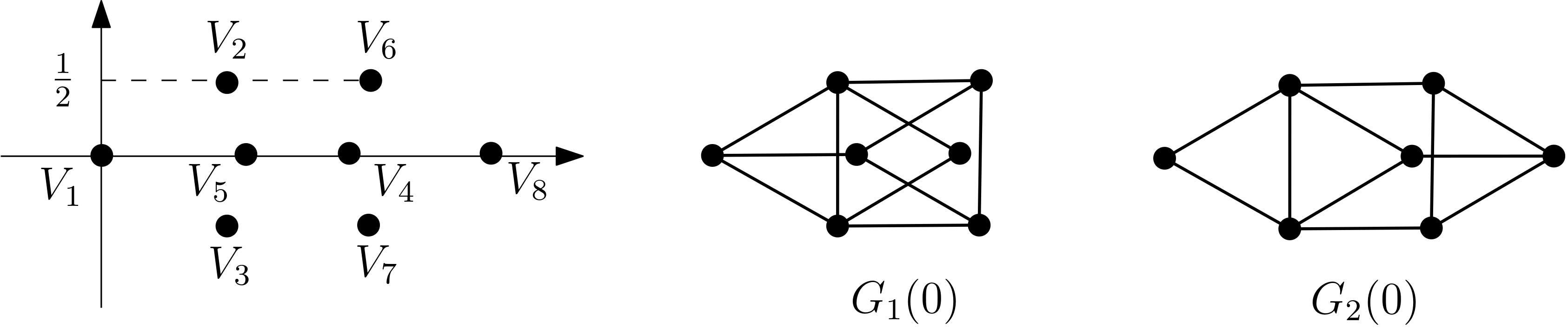}
  \caption{The unit distance graphs $G_1(\theta)$ and $G_2(\theta)$ for $\theta = 0$.  }
  \label{fig1}
\end{figure}

Consider the two unit distance graphs with vertex sets
\begin{equation}\label{g1def}
  G_1=G_1(\theta)=\{V_1, V_2, V_3, V_4, V_5, V_6, V_7\}
\end{equation}
and
\begin{equation}\label{g2def}
  G_2=G_2(\theta)=\{V_1, V_2, V_3, V_4, V_6, V_7, V_8\},
\end{equation}
and all pairs of vertices at distance 1 being connected with an edge.
Notice that for all values of $\theta$, both $G_1$ and $G_2$ have independence number $\alpha=3$, and both of them contain the same two independent triangles:  $(V_1, V_4, V_6)$ and $(V_1, V_4, V_7)$. Therefore, $\Sigma_3(G_1)=\Sigma_3(G_2)$, which we commonly denote by~$\Sigma_3$.

\medskip

We apply Lemma \ref{lemma_triangleupper} to $G_1$ to obtain
\[
\Sigma_3 \leq 1 - 7f(0) + \sum_{\{x,y\} \in \binom{G_1}{2} } f(x - y),
 \]
while Lemma \ref{lemma_trianglelower2} applied to $G_2$ implies that
\[
\Sigma_3\ge \sum_{x\in G_2} f(x)- 2f(0).
\]
Comparing these two estimates leads to
\begin{align}
\tag*{\hspace{10 pt}(CT)} \sum_{x\in G_2} f(x)\le 1 - 5f(0) + \sum_{\{x,y\} \in \binom{G_1}{2} } f(x - y).
\end{align}
This constraint turns out to be surprisingly powerful.

\medskip

It is natural to wonder whether sharper bounds on $m_1(\R^2)$ could be reached by imposing further conditions on $f$, possibly coming from $4$-tuple, $5$-tuple, etc., correlations of the set $A$. The answer is provided by Theorem 1.1 and Theorem 7.3 in \cite{DOV}, which state that if we write up all {\it complete positivity constraints} or all {\it Boolean quadratic constraints} on the function $f$, then the implied upper bound on the density of $A$ will converge to $m_1(\R^2)$. This means, in theory, that {\it this method is guaranteed to succeed} in proving the conjecture $m_1(\R^2)<0.25$, if the inequality is true. In practice, however, the Boolean quadratic cone has so many facets even in relatively small dimensions that it is hopeless to add them all in any kind of numerical computation. For this reason, one is restricted to finding "clever" new constraints by geometric intuition, such as (CT) above. In comparison, we are not aware of such a theoretical guarantee of success for the method of fractional chromatic numbers of \cite{BPS18}: as far as we know, it may well happen that the fractional chromatic number of any finite unit distance graph is smaller than 4, while $m_1(\R^2)<0.25$.

\section{Fourier analysis and linear programming} \label{sec_linprog}

The detailed description of the Fourier analytic method can be found in \cite{KMOR15}, we will only summarize the essentials here. We remind the reader that the 1-avoiding set $A$ is assumed to be periodic with a period lattice $L$. This enables us to perform a Fourier expansion of $f(x)=\delta(A\cap (A-x))$ in the Hilbert space $L^2(\R^2/L)$.

\medskip

We also apply a standard trick of averaging. Note that all the inequalities stated in constraints (C1), (C2), (C1R) and (CT) hold for all rotated copies of a given graph. Thus, they may be averaged over the orthogonal group $O(2)$ of the plane. We will use the notation $\mathring{f}(x)$ for the radial average of $f(x)$:
\begin{equation}\label{f_av}
  \mathring{f}(x) = \frac 1 {2 \pi} \int_{S^1} f(\xi |x|) d \omega(\xi),
\end{equation}

where $\omega$ is the perimeter measure on the unit circle $S^1$. The advantage of this averaging is that $\mathring{f}$ is radial, i.e. $\mathring{f}(x)$ depends only on $|x|$. Also, the above remark shows that the constraints (C1), (C2), (C1R) and (CT) remain valid for the function $\mathring{f}$.

\medskip
As usual, the Bessel function of the first kind with parameter 0, $\O_2(|x|)$,  is defined as
\[
\O_2(|x|) = \frac 1 {2 \pi} \int_{S^1} e^{ i x \xi} d \omega(\xi),
\]
As explained in \cite{KMOR15},
\[
\mathring{f}(x) =\sum_{u \in 2 \pi L^*}\widehat{f}(u) \Omega_2( |u| |x|),
\]
where $L^*$ denotes the dual lattice of $L$. Introducing the notation
\[
\kappa(t) = \sum_{u \in 2 \pi L^* , |u| = t} \widehat{f}(u),
\]
the previous equation simplifies to
\begin{equation}\label{fkappa}
\mathring{f}(x) = \sum_{t \geq 0} \kappa(t) \Omega_2( t |x|),
\end{equation}
where the summation is taken for those values of $t$ which come up as a length of a vector in $2\pi L^*$.

\medskip

Introduce the notations $\delta = \delta(A)$ and $\widetilde{\kappa}(t) = \frac {\kappa(t)}{\delta}$. Conditions $f(x) \geq 0$, $f(0) = \delta$, (C0), (C1R) and (CT) via \eqref{f_av} and \eqref{fkappa} lead to the following properties of the function $\wk(t)$ (see \cite{KMOR15} for details):

\begin{itemize}\label{wk}
\item[($\widehat{\textrm{CP}}$)] $\wk(t) \geq 0$  for every $t \geq 0$,
\item[($\widehat{\textrm{CS}}$)]$ \sum_{t \geq 0} \wk(t) =1$,
  \item [($\widehat{\textrm{C0}}$)] $ \sum_{t \geq 0} \wk(t) \Omega_2( t ) =0 $,
    \item [($\widehat{\textrm{C1R}}$)] For every finite graph $G$,
  \[
  \sum_{t \geq 0} \wk(t) \left( \sum_{x \in G} \Omega_2( t |x|) -  \sum_{\{x,y\} \in E(G)}  \Omega_2( t |x -y|)\right) \leq \alpha(G)
  \]
  \item [($\widehat{\textrm{CT}}$)] For any $\theta\in [0,2\pi]$, and the graphs $G_1(\theta)$ and $G_2(\theta)$ defined in Section~\ref{sec_triple} by \eqref{g1def} and \eqref{g2def},
  \[
  \sum_{t \geq 0} \wk(t) \left( \sum_{\{x,y\} \in \binom{G_1}{2}} \Omega_2( t |x -y|)-\sum_{x\in G_2} \Omega_2( t |x|) \right) \geq 5 -\frac 1 {\delta}\,.
  \]
\end{itemize}

\noindent

Forget, for a moment, that $\delta=\delta(A)$, and just fix any particular value of $\delta>0$.  Consider the coefficients $\wk(t)$  (for $t\ge 0$) as variables in the continuous linear program
\begin{align} \label{LP}
\begin{split}
&\textrm{maximize } \wk(0)\\
&\textrm{subject to } (\widehat{\textrm{CP}}), (\widehat{\textrm{CS}}), (\widehat{\textrm{C0}}), (\widehat{\textrm{C1R}}),(\widehat{\textrm{CT}}).
\end{split}
\end{align}

Let $s=\sup \wk(0)$ denote the solution of this LP-problem. If, for a given value of $\delta$, there exists a 1-avoiding set $A$ with density $\delta$, then there exists a system of values $\wk(t)$ satisfying \eqref{LP} such that $\wk(0)=\delta$. Therefore, in such a case, $s\ge \delta$. Conversely, if for a given value of $\delta$ we find that $s<\delta$, then we may conclude that no 1-avoiding set with density $\delta$ exists, therefore, $m_1(\R^2)\le \delta$. By linear programming duality, the inequality $s<\delta$ may be testified by the existence of a witness function.

\begin{proposition}\label{lp-bound}
Let $\Gcal$ be a finite family of finite graphs in~$\R^2$, $\Tcal$ be a finite collection of angles in $[0, 2\pi]$, and for each $\theta\in \Tcal$ consider the unit distance graphs $G_1(\theta), G_2(\theta)$ defined in Section \ref{sec_triple} by \eqref{g1def} and \eqref{g2def}. Suppose that for some non-negative numbers ~$v_0$,
$v_1$,  $w_G$ for $G \in \Gcal$ and $w_\theta$ for $\theta \in \Tcal$
the function $W(t)$ defined by
\begin{align}
\label{witness}
\begin{split}
W(t) &= v_0 + v_1 \Omega_2(t) \\
& +  \sum_{G \in \Gcal} w_G \left( \sum_{x \in G} \Omega_2( t |x|) - \sum_{\{x,y\} \in E(G) }  \Omega_2( t |x -y|) \right) \\
&- \sum_{\theta \in \Tcal} w_{\theta} \left( \sum_{\{x,y\} \in \binom{G_1(\theta)}{2}} \Omega_2(t|x-y|)- \sum_{x \in G_2(\theta)} \Omega_2( t |x|) \right)\\
\end{split}
\end{align}
satisfies $W(0) \geq 1$ and $W(t) \geq0$ for $t>0$.

Then $m_1(\R^2) \leq \delta$, where $\delta$ is the positive solution of the equation
\begin{equation}\label{deltaup}
\delta^2 = \delta \Big( v_0 + \sum_{G \in \Gcal} w_G \alpha(G) - 5\sum_{\theta \in \Tcal} w_\theta    \Big) +   \sum_{\theta \in \Tcal} w_\theta .
\end{equation}
\end{proposition}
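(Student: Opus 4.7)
The plan is to run linear programming duality by hand: the function $W(t)$ serves as the dual certificate, and pairing it against the primal feasible solution $\wk$ coming from an actual 1-avoiding set yields the stated upper bound. Concretely, I would fix a measurable 1-avoiding set $A \subset \R^2$ of density $\delta_0 := \delta(A)$ and invoke the fact that its associated $\wk$ satisfies all of $(\widehat{\textrm{CP}})$--$(\widehat{\textrm{CT}})$ with $\wk(0) = \delta_0$.

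The central step is to estimate the pairing $S := \sum_{t \geq 0} \wk(t)\, W(t)$ from both sides. For the upper bound I would substitute \eqref{witness} into $S$ and dispatch the five pieces separately: $(\widehat{\textrm{CS}})$ turns the $v_0$-term into $v_0$; $(\widehat{\textrm{C0}})$ kills the $v_1\, \Omega_2(t)$-term; for each $G \in \Gcal$, non-negativity of $w_G$ together with $(\widehat{\textrm{C1R}})$ bounds its contribution by $w_G\, \alpha(G)$; and for each $\theta \in \Tcal$, the minus sign in front of the $w_\theta$-term together with the lower bound $5 - 1/\delta_0$ in $(\widehat{\textrm{CT}})$ (again using $w_\theta \geq 0$) bounds its contribution by $-5 w_\theta + w_\theta / \delta_0$. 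Summing gives
\[
S \;\le\; v_0 + \sum_{G \in \Gcal} w_G\, \alpha(G) - 5\sum_{\theta \in \Tcal} w_\theta + \frac{1}{\delta_0}\sum_{\theta \in \Tcal} w_\theta.
\]
For the lower bound, since $\wk(t) \geq 0$ everywhere and $W(t) \geq 0$ for $t > 0$, I can discard all positive-$t$ terms to obtain $S \geq \wk(0)\, W(0) \geq \delta_0$, using $\wk(0) = \delta_0$ and $W(0) \geq 1$.

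Combining the two estimates and clearing the denominator $\delta_0 > 0$ yields the quadratic inequality
\[
\delta_0^{\,2} \;\le\; \delta_0 \Bigl( v_0 + \sum_{G \in \Gcal} w_G\, \alpha(G) - 5\sum_{\theta \in \Tcal} w_\theta \Bigr) + \sum_{\theta \in \Tcal} w_\theta,
\]
which is exactly \eqref{deltaup} with $=$ replaced by $\le$. Writing this as $\delta_0^{\,2} - B \delta_0 - C \leq 0$ with $C = \sum_\theta w_\theta \geq 0$, the polynomial $x^2 - Bx - C$ has a unique positive root -- the $\delta$ of the statement -- so $\delta_0 \leq \delta$. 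Since $A$ was arbitrary, $m_1(\R^2) \leq \delta$. The only delicate point in the execution is the sign bookkeeping: the minus sign in front of the $w_\theta$-terms in $W$ flips the direction of $(\widehat{\textrm{CT}})$, and it is precisely the $1/\delta_0$ piece there that turns the resulting bound from linear into quadratic in $\delta_0$; I do not anticipate any deeper obstacle.
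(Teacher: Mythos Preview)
Your proposal is correct and follows essentially the same duality argument as the paper: bound the pairing $\sum_{t\ge 0}\wk(t)W(t)$ below by $\wk(0)=\delta_0$ using $W(0)\ge 1$, $W(t)\ge 0$, and $(\widehat{\textrm{CP}})$, and above by substituting \eqref{witness} and applying $(\widehat{\textrm{CS}})$, $(\widehat{\textrm{C0}})$, $(\widehat{\textrm{C1R}})$, $(\widehat{\textrm{CT}})$ term by term. Your write-up is in fact slightly more explicit than the paper's, which omits $(\widehat{\textrm{CT}})$ from its list of invoked constraints and leaves the passage from the inequality \eqref{deltaup1} to the root bound $\delta_0\le\delta$ implicit; your remark that $x^2-Bx-C$ with $C\ge 0$ has a unique positive root is exactly what is needed there.
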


\begin{proof}

For any function $W(t)$ satisfying
$W(0) \geq 1$ and $W(t) \geq0$ for $t>0$ we have
\begin{equation}\label{alpha0}
 \delta = \wk(0) \leq \sum_{t \geq 0} \wk(t) W(t).
\end{equation}
If $W(t)$ is in the form \eqref{witness}, then inequalities $(\widehat{\textrm{CP}}), (\widehat{\textrm{CS}}),  (\widehat{\textrm{C0}}),(\widehat{\textrm{C1R}})$ and \eqref{alpha0} imply
\begin{equation}\label{deltaup1}
\delta \leq   v_0 + \sum_{G \in \Gcal} w_G \alpha(G) - 5\sum_{\theta \in \Tcal} w_\theta + \frac{1}{\delta}\sum_{\theta \in \Tcal} w_\theta .
\qedhere
\end{equation}
\end{proof}

\section{Numerical bounds}\label{sec_num}

\noindent
As indicated in Proposition \ref{lp-bound} above,  we will use two types of constraints, (C1R) and (CT), in addition to the trivial ones. Constraint (C1R) will be applied to certain isosceles triangles in the plane. Constraint (CT) will be applied, with particular choices of the angle $\theta$, to the graphs $G_1(\theta), G_2(\theta)$ defined by \eqref{g1def} and \eqref{g2def}.

\medskip

In order to handle the linear program numerically, we use a discrete approximation. Based on the previous results, we only search for the coefficients $\wk (t_i)$, where $t_i = i \eps_0$, with $\eps_0 = 0.05$ and $i \leq 12000$, thus, $t_i \in [ 0, 600 ]$. For all other values of $t \geq 0$, we set $\wk(t) = 0$. The error resulting from the discretization is corrected in the last step of the algorithm.

Finding suitable triangles and graphs which yield strong upper bounds on $m_1(\R^2)$ is a tedious task, where we utilized a bootstrap algorithm. Once a given set of constraints is fixed, and the corresponding linear program is solved, one has to numerically search for configurations of points for which (C1R) or (CT) is violated. Adding these to the list of constraints, and dropping the non-binding ones, the same procedure may be repeated until no significant improvement may be obtained. In its polished form, our construction uses 15 nontrivial linear constraints: 10 of the type (C1R) and 5 of type (CT).

The family $\Gcal$ used for the estimate consists of 10 triangles of the form $\{(x_1, 0), (x_2, y), (x_2, -y)\}$, with the triples $(x_1, x_2, y)$ being listed in Table~\ref{tab:triangle}. Constraint (CT) is applied to the graphs defined by \eqref{g1def} and \eqref{g2def} with the values of $\theta$ ranging over the family of $\Tcal$, which is listed in Table~\ref{tab:theta}. In order to avoid errors stemming from numerical computations, all the non-zero norms and distances between points of the configurations are chosen to be at least 0.1.

\begin{figure}[h!]
  \centering
  \includegraphics[width = 0.5 \textwidth]{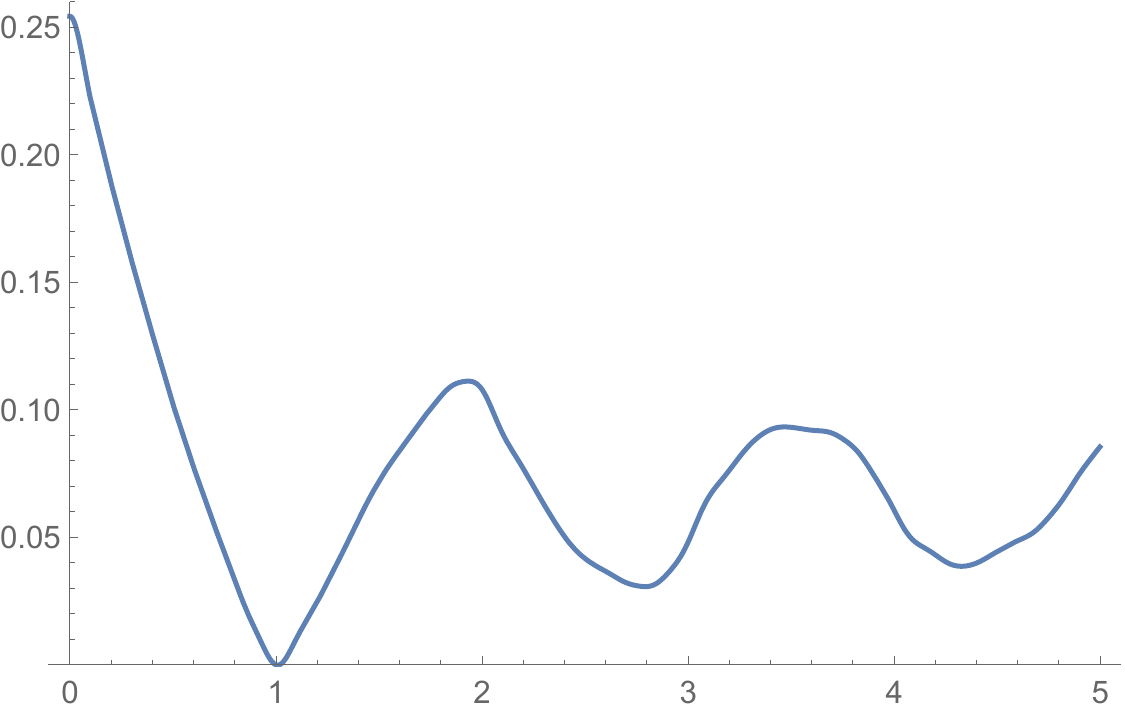}
  \caption{The  function $\mathring{f}(x) / \delta$. }
  \label{fig_f0}
\end{figure}

Using these graphs, we construct the witness function $W(t)$ as in \eqref{witness} %Proposition~\ref{lp-bound}
with the coefficients described in Table~\ref{tab:coeff}.  It is easy to check numerically that $W(t)$ satisfies the required properties. Technical details about the rigorous verification of this are described in~\cite{KMOR15}.

With this construction of $W(t)$, the quadratic equation \eqref{deltaup} takes the form
\[
\delta^2  + 7.188702 \, \delta - 1.893645= 0,
\]
whose positive solution is $\delta = 0.254416$.

The coefficients $\wk(t)$ obtained as the solution of the linear program \eqref{LP} also provide the normalized, radialized autocorrelation function $\mathring{f}(x)/\delta$ via equation \eqref{fkappa}. This function  could, in principle, be the autocorrelation of a hypothetical 1-avoiding set $A$ with density $\delta=0.254416$. The function is plotted in Figure~\ref{fig_f0}.

\section{Acknowledgement}

The authors are grateful to F. M. Oliveira Filho and Th. Bellitto for the inspiring conversations, and for the anonymous referee for providing helpful suggestions.

\section{Appendix: Numerical values}

\begin{table}[h!]
\bgroup
\setbox0=\hbox{$(0)$}
\newdimen\h \h=\ht0
\newdimen\d \d=\dp0

\advance\h by 6pt
\advance\d by 6pt

\def\hr{\vrule height\h width0pt depth0pt}
\def\dr{\vrule height0pt width0pt depth\d}
\hbox to\hsize{\hss
\begin{tabular}{ll|ll}
\hline
\hr \dr
$G_1$&$\{-0.123996, 1.946331, 0.501521\}$&$G_2$&$\{-0.157711, 0.542869, 0.499760\}$\\
\hline
\hr \dr
$G_3$&$\{0.553873, -0.276937, 0.479669\}$&$G_4$&$\{-0.424898, 0.382590, 0.490199\}$\\
\hline
\hr \dr
$G_5$&$\{2.70637, 1.842120, 0.506318\}$&$G_6$&$\{-0.955984, 0.026128, 0.112481\}$\\
\hline
\hr \dr
$G_7$&$\{-0.767499, 0.143459, 0.340280\}$&$G_8$&$\{0.476394, -0.337821, 0.486967\}$\\
\hline
\hr \dr
$G_9$&$\{0.668340, -0.199610, 0.428893\}$&$G_{10}$&$\{-0.177622, 0.519323, 0.499597\}$\\[2 pt]
\hline
\end{tabular}
\hss}
\egroup
\bigskip

\caption{Triples $\{x_1, x_2, y \}$ corresponding to the family~$\Gcal$.
%of triangles of the form $\{(x_1, 0), (x_2, y), (x_2, -y)\}$.
   }
\label{tab:triangle}
\end{table}

\vspace{-10 pt}
\begin{table}[h!]
\bgroup
\setbox0=\hbox{$(0)$}
\newdimen\h \h=\ht0
\newdimen\d \d=\dp0

\advance\h by 6pt
\advance\d by 6pt

\def\hr{\vrule height\h width0pt depth0pt}
\def\dr{\vrule height0pt width0pt depth\d}
\hbox to\hsize{\hss
\begin{tabular}{ll|ll|ll|ll|ll}
\hline
\hr \dr $\theta_1$& $1.851176$  &$\theta_2$   & 1.864223  & $\theta_3$ & 1.911210 & $\theta_4$ & 1.935475 & $\theta_5$ & 1.954980 \\
\hline
\end{tabular}
\hss}
\egroup
\bigskip

\caption{Angles in the family $\Tcal$. }
\label{tab:theta}
\end{table}

\vspace{-10 pt}

\begin{table}[h!]
\bgroup
\setbox0=\hbox{$(0)$}
\newdimen\h \h=\ht0
\newdimen\d \d=\dp0

\advance\h by 6pt
\advance\d by 6pt

\def\hr{\vrule height\h width0pt depth0pt}
\def\dr{\vrule height0pt width0pt depth\d}
\hbox to\hsize{\hss
\begin{tabular}{ll|ll|ll}
\hline
\hr \dr $v_0$& $1.4024971970$  &$v_1$   & \hspace{-2 mm}10.9609841893  & $w_{G_1}$ & 0.1938457698 \\
\hline
\hr \dr $w_{G_2}$& $ 0.2751221022$  &$w_{G_3}$  & 0.5079791712  & $w_{G_4}$ & 0.3069034307 \\
\hline
\hr \dr $w_{G_5}$& $ 0.3404898985$  &$w_{G_6}$  & 0.3361763782  & $w_{G_7}$ & 0.1961680281 \\
\hline
\hr \dr $w_{G_8}$& $ 0.0133266364$  &$w_{G_9}$  & 0.5532445066  & $w_{G_{10}}$ & 0.0474157478 \\
\hline
\hr \dr $w_{\theta_1}$& $ 0.3055968204$  &$w_{\theta_2}$  & 0.6557537159  & $w_{\theta_3}$ & 0.1173616739 \\
\hline
\hr \dr $w_{\theta_4}$& $ 0.5306336291$  &$w_{\theta_5}$  & 0.2842993917 & & \\
\hline
\end{tabular}
\hss}
\egroup
\bigskip

\caption{Coefficients of the witness function $W(t)$. }
\label{tab:coeff}
\end{table}


\begin{thebibliography}{99}

\bibitem{BPT15} C. Bachoc, A. Passuello, and A. Thiery, {\em The density of sets avoiding distance 1 in Euclidean
space}. Discrete \& Computational Geometry {\bf 53} (2015), 783–-808.

\bibitem{B18} Th. Bellitto, {\em Walks, transitions and geometric distances in graphs.} PhD Thesis, Universit\'{e} de Bordeaux, 2018.

\bibitem{BPS18} Th. Bellitto, A. P\^{e}cher, and A. S\'{e}dillot, {\em On the density of sets of the Euclidean plane avoiding distance 1.} {\texttt arXiv:1810.00960} (2018), 1--11.

\bibitem{Cr67} H. T. Croft, {\em Incidence incidents}. Eureka {\bf 30} (1967), 22--26.

\bibitem{DOV} E. DeCorte, F. M. de Oliveira Filho, and F. Vallentin, {\em Complete positivity and distance-avoiding sets.} Math. Program. (2020). {\texttt https://doi.org/10.1007/s10107-020-01562-6}, 1--72.   
    	

\bibitem{E82} P. Erd\H os, {\em Problems and results in combinatorial geometry.} in: Discrete Geometry and Convexity
(New York, 1982), Annals of the New York Academy of Sciences {\bf 440}, New York
Academy of Sciences, New York, 1985, pp. 1–-11.

\bibitem{G18+} A. de Grey, {\em
The chromatic number of the plane is at least 5}. 	arXiv:1804.02385 (2018).

\bibitem{Ka68} Y. Katznelson, {\em An introduction to Harmonic Analysis.} Wiley, New York, 1968.

\bibitem{KMOR15}
T. Keleti, M. Matolcsi, F. M. de Oliveira Filho, and I. Z. Ruzsa, \textit{Better bounds for planar sets avoiding unit distances.} Discrete \& Computational Geometry {\bf 55} (2016), 642-–661.

\bibitem{Mo61} L. Moser, W. Moser, {\em Solution to problem 10.} Canadian Math. Bull. {\bf 4} (1961), 187--189.

\bibitem{OV} F. M. de Oliveira Filho and F. Vallentin, {\em  Fourier analysis, linear programming, and densities
of distance-avoiding sets in $\R^n$}. Journal of the European Mathematical Society {\bf 12} (2010), 1417--1428.


\bibitem{Sze02} L. A. Sz\'ekely, {\em Erd\H os on unit distances and the Szemer\'edi-Trotter theorems}. in: Paul Erd\H os
and His Mathematics II (G. Hal\'asz, L. Lov\'asz, M. Simonovits, and V. T. S\'os, eds.), Bolyai
Society Mathematical Studies {\bf 11}, J\'anos Bolyai Mathematical Society, Budapest; Springer-Verlag, Berlin, 2002, pp. 646--666.
\end{thebibliography}
\end{document}